\newif\ifbw
\definecolor{orange}{HTML}{f6a800} 
\definecolor{red}{HTML}{bd0000} 
\definecolor{reddot}{HTML}{bd0000} 
\definecolor{green}{HTML}{74DF00} 
\definecolor{gray}{HTML}{646567} 
\definecolor{darkblue}{HTML}{00549F}
\definecolor{lightblue}{HTML}{e8f1fa}
\definecolor{gray2}{HTML}{9c9e9f} 
\definecolor{lightblue2}{HTML}{8ebae5}
\definecolor{petrol}{HTML}{2d7f83}
\definecolor{lila}{HTML}{6A0888}
\definecolor{backgroundcolor}{HTML}{FFFFFF}
\definecolor{backgroundcolor1}{HTML}{e8f1fa}
\definecolor{backgroundcolor2}{HTML}{dcdddd}
\definecolor{backgroundcolor3}{HTML}{eceded}
\tikzstyle{greenline}=[color=green]
\tikzstyle{orangeline}=[color=orange]
\tikzstyle{redline}=[color=red]
\tikzstyle{whiteline}=[color=white]
\tikzstyle{orangedot}=[fill=orange]
\tikzstyle{green}=[draw=black,fill=black!50]
\tikzstyle{darkblue}=[draw=black,fill=white]
\tikzstyle{orange}=[draw=black,fill=black!85, preaction={fill, white}]
\tikzstyle{gray}=[draw=black,fill=black!20]
\tikzstyle{red}=[draw=black,fill=black!85]
\tikzstyle{petrol}=[fill=black,draw=white,preaction={draw=black,line width=3pt}]
\tikzstyle{lila}=[fill=white,draw=black,copy shadow={shadow scale=1.5,shadow xshift=0pt, shadow yshift=0pt}]
\tikzstyle{lightblue2}=[fill=black!30,draw=black,preaction={draw=black!30,line width=3pt}]
\tikzstyle{gray2}=[fill=black!20,draw=black!60,preaction={draw=black!60,line width=3pt}]
\tikzstyle{orangedot}=[fill=black!50,draw=white,preaction={draw=black!50,line width=3pt}]
\tikzstyle{greenline}=[draw=black!20,dashed]
\tikzstyle{orangeline}=[draw=black!20,dotted]
\tikzstyle{redline}=[draw=black!20,solid]
\newcommand{\Z}{\mathbb{Z}}
\newcommand{\N}{\mathbb{Z}_+}
\newcommand{\NN}{\mathbb{Z}_{\ge0}}
\newcommand{\R}{\mathbb{R}}
\newcommand{\HH}{\mathcal{H}}
\newcommand{\OO}{\mathcal{O}}
\newcommand{\SL}{\operatorname{SL}}
\newcommand{\PSL}{\operatorname{PSL}}
\newcommand{\ord}{\operatorname{ord}}
\newcommand\inv{^{-1}}
\title[Regular origamis with totally non-congruence groups as Veech groups]{Regular origamis with totally non-congruence groups as Veech groups}
\author*{Thevis, Andrea}
\email{thevis@art.rwth-aachen.de}
\affiliation{Saarland University and RWTH Aachen}
\keywords{translation surfaces, square-tiled surfaces, origamis, Veech groups, totally non-congruence groups, triangle groups, simple groups}
\begin{document}

\begin{abstract}
Veech groups are an important tool to examine translation surfaces and related mathematical objects. Origamis, also known as square-tiled surfaces, form an interesting class of translation surfaces with finite index subgroups of $\SL(2,\Z)$ as Veech groups. We study when Veech groups of origamis with maximal symmetry group are totally non-congruence groups, i.e., when they surject onto $\SL(2, \Z/n\Z)$ for each $n\in \N$. For this, we use a result of Schlage-Puchta and Weitze-Schmithüsen to deduce sufficient conditions on the deck transformation group of the origami. More precisely, we show that origamis with certain quotients of triangle groups as deck transformation groups satisfy this condition. All Hurwitz groups are such quotients.
\end{abstract}

\begin{abstract}[spanish]
Los grupos de Veech son una herramienta importante para examinar  superficies de traslación y objetos matemáticos relacionados. Los  origamis, también conocidos como superficies cuadradas, forman una  clase interesante de superficies de traslación con subgrupos de índice  finito en $\SL(2,\Z)$ como grupos de Veech. Estudiamos cuándo los  grupos de Veech de los origamis con grupo de simetría máximo son  grupos totalmente no congruentes, es decir, cuándo surgen en $\SL(2,  \Z/n\Z)$ para cada $n\in \N$. Para ello, utilizamos un resultado de  Schlage-Puchta y Weitze-Schmithüsen para deducir condiciones  suficientes sobre el grupo de transformación de  \extranj{english}{deck} del origami. Más concretamente, mostramos que  los origamis con ciertos cocientes de grupos triangulares como grupos  de transformación de \extranj{english}{deck} satisfacen esta  condición. Todos los grupos de Hurwitz son tales cocientes.
\end{abstract}

\maketitle

\section{Introduction}

A \textbf{translation surface} is a closed Riemann surface with an additional structure which can be described by certain gluing data. We construct such a surface as finitely many polygons in the Euclidean plane with edge identifications along pairs of parallel edges. If all polygons are unit squares one obtains an \textbf{origami} (also known as \textbf{square-tiled surface}). Each origami naturally defines a torus cover sending each square in the tiling to the torus. We are interested in the case where this cover is normal. Then the symmetry group of the origami is maximal and we call the origami \textbf{regular}. A regular origami is completely determined by its deck transformation group $G$ and two deck transformations $x$ and $y$ mapping a fixed square to its right and upper neighbor, respectively (see e.g. \cite{flake2020strata}). We denote such an origami by the tuple $(G,x,y)$.

The matrix group $\SL(2,\R)$ acts on translation surfaces by sheering the polygons in the Euclidean plane. Sometimes the orbit defines an algebraic curve in the moduli space of complex algebraic curves called \textbf{Teichmüller curve}. The stabilizer under this action captures whether this happens and - assuming a positive answer - much of the geometry of the Teichmüller curve. For a translation surface $X$, the stabilizer is called the \textbf{Veech group} of $X$ and is denoted by $\SL(X)$. Origamis define always Teichmüller curves. The Veech groups of \textbf{reduced origamis}, i.e., the geodesic segments between singularities span $\Z^2$, are finite index subgroups of $\SL(2,\Z)$. Here, a singularity means a vertex of a square in the tiling with cone angle larger than $2\pi$. Since non-trivial regular orgiamis are reduced, we restrict ourselves to studying the $\SL(2,\Z)$-action. On a regular origami $\OO=(G,x,y)$ this action is defined by $S\cdot \OO=(G,y^{-1},x)$ and $T\cdot \OO=(G,x,yx^{-1})$, where $S=\left(\begin{smallmatrix}0&-1\\1&0\end{smallmatrix}\right),~ T=\left(\begin{smallmatrix}1&1\\0&1\end{smallmatrix}\right).$ For more details, see e.g. \cite{schmithuesen2004algorithm} and \cite{WS-deficiency}.

We are interested in the following open questions: for which origamis are the Veech groups congruence subgroups and for which are they far away from being a congruence subgroup? Weitze-Schmithüsen showed in \cite{schmithuesen2004algorithm} that almost all congruence groups occur as Veech groups. However, Hubert and Leliévre proved that in the stratum $\HH(2)$ all but one of the occurring Veech groups are not congruence groups (see \cite{HubertLelievre06}). In \cite{WS-deficiency}, Weitze-Schmithüsen introduced the deficiency of finite index subgroups of $\SL(2,\Z)$. It measures how far the group is from being a congruence subgroup. She also established the notion of totally non-congruence groups. Such a group projects surjectively onto $\SL(2,\Z/n\Z)$ for each $n\in \N$, i.e., no information about the group itself can be recovered from the images under these natural projections. In \cite{sp-ws-2018}, an infinite family of origamis with totally non-congruence subgroups as Veech groups are constructed for each stratum. These origamis had only few symmetries. In this article, we present sufficient conditions for regular origamis to have totally non-congruence subgroups as Veech groups and introduce a class of regular origamis satisfying this condition. 

A version of this paper will be published in the proceedings of the 3rd BYMAT Conference 2020.

\section{Prerequisites and preliminary results}\label{sec prerequisites}

In this section, we introduce basic concepts and preliminary results, which are used in \Cref{sec application to regular origamis}. Note that the Euclidean metric on $\R^2$ lifts to a metric on a translation surface. Therefore, notions as directions and geodesics are well-defined on translation surfaces. A \textbf{cylinder} on a translation surface is a maximal collection of parallel closed geodesics. Given a cylinder on a translation surface there exist $w,h>0$ such that the cylinder is isometric to a Euclidean cylinder $\R/w\Z\times (0,h)$. One calls $w$ the \textbf{circumference}, $h$ the \textbf{height}, and the quotient $\frac{h}{w}$ the \textbf{modulus} of the cylinder. If the genus of the translation surface is larger than one, a cylinder is bounded by geodesics between singularities. We call such a geodesic a \textbf{saddle connection}. The direction of a saddle connection bounding a cylinder is called the \textbf{direction of the cylinder}. A \textbf{cylinder decomposition} is a collection of pairwise disjoint cylinders such that the union of their closures covers the whole surface.

The cylinder decomposition of an origami leads to a parabolic element in its Veech group given the following situation. Let $\OO$ be an origami, $v\in \Z^2$ be a rational direction, and $A\in \SL(2,\Z)$ be a matrix mapping $e_1 = \tbinom{1}{0}$ to $v$. If $\OO$ decomposes into cylinders $C_1,\dots,C_k$ with inverse moduli $m_1,\dots,m_k$ and $m$ is the smallest common integer multiple of all the $m_i$, then the matrix 
$A\cdot
\bigl(\begin{smallmatrix}
    1 & m\\
    0 & 1
\end{smallmatrix}\bigr)
\cdot A^{-1}$ is contained in the Veech group $\SL(\OO)$ (see e.g. \cite[Section 2.1]{WS-deficiency}).

\begin{center}
\begin{figure}
  \begin{minipage}[c]{0.27\textwidth}
    \hspace{0.45cm}\begin{tikzpicture}[remember picture, scale=0.8]


\draw[fill=green!40](-1,0)--(3,0)--(3,1)--(-1,1);

\draw[fill=darkblue!30](-1,1)--(0,1)--(0,2)--(-1,2);
\draw[fill=darkblue!30](1,1)--(2,1)--(2,2)--(1,2);
\draw[fill=darkblue!30](0,-1)--(1,-1)--(1,0)--(0,0);
\draw[fill=darkblue!30](2,-1)--(3,-1)--(3,0)--(2,0);




\draw[line width=.25mm] (0,0) -- (1,0) -- (1,1) -- (0,1) -- (0,0);
\draw[line width=.25mm] (1,0) -- (2,0) -- (2,1) -- (1,1);
\draw[line width=.25mm] (2,0) -- (3,0) -- (3,1) -- (2,1);
\draw[line width=.25mm] (0,0) -- (-1,0) -- (-1,1) -- (0,1);

\draw[line width=.25mm] (0,0) -- (0,-1) -- (1,-1) -- (1,0);
\draw[line width=.25mm] (2,0) -- (2,-1) -- (3,-1) -- (3,0);

\draw[line width=.25mm] (1,1) -- (1,2) -- (2,2) -- (2,1);
\draw[line width=.25mm] (0,1) -- (0,2) -- (-1,2) -- (-1,1);

\fill[darkblue] (0,0) circle (0.1);
\fill[darkblue] (2,0) circle (0.1);
\fill[darkblue] (2,2) circle (0.1);
\fill[darkblue] (0,2) circle (0.1);

\fill[orange] (1,0) circle (0.1);
\fill[orange] (3,0) circle (0.1);
\fill[orange] (1,2) circle (0.1);
\fill[orange] (-1,2) circle (0.1);
\fill[orange] (-1,0) circle (0.1);

\fill[gray] (0,1) circle (0.1);
\fill[gray] (2,1) circle (0.1);
\fill[gray] (0,-1) circle (0.1);
\fill[gray] (2,-1) circle (0.1);

\fill[red] (1,-1) circle (0.1);
\fill[red] (3,-1) circle (0.1);
\fill[red] (1,1) circle (0.1);
\fill[red] (3,1) circle (0.1);
\fill[red] (-1,1) circle (0.1);


\draw[line width=.25mm] (-1.1,0.5) -- (-0.9,0.5);
\draw[line width=.25mm] (2.9,0.5) -- (3.1,0.5);

\draw[line width=.25mm] (1.9,-0.45) -- (2.1,-0.45);
\draw[line width=.25mm] (1.9,-0.55) -- (2.1,-0.55);

\draw[line width=.25mm] (-.1,1.45) -- (.1,1.45);
\draw[line width=.25mm] (-.1,1.55) -- (.1,1.55);

\draw[line width=.25mm] (-0.1,-0.5) -- (.1,-0.5);
\draw[line width=.25mm] (-0.1,-0.4) -- (.1,-0.4);
\draw[line width=.25mm] (-0.1,-0.6) -- (.1,-0.6);
\draw[line width=.25mm] (1.9,1.5) -- (2.1,1.5);
\draw[line width=.25mm] (1.9,1.4) -- (2.1,1.4);
\draw[line width=.25mm] (1.9,1.6) -- (2.1,1.6);

\draw[line width=.25mm] (.9,1.6) -- (1.1,1.5) -- (.9,1.4);

\draw[line width=.25mm] (2.9,-.6) -- (3.1,-.5) -- (2.9,-.4);

\draw[line width=.25mm] (-.9,1.6) -- (-1.1,1.5) -- (-.9,1.4);

\draw[line width=.25mm] (1.1,-.6) -- (.9,-.5) -- (1.1,-.4);


\draw[line width=.25mm] (-0.55,1.9) -- (-0.45,2.1);
\draw[line width=.25mm] (-.45,.1) -- (-.55,-.1);

\draw[line width=.25mm] (0.4,-1.1) -- (.5,-.9);
\draw[line width=.25mm] (0.5,-1.1) -- (.6,-.9);

\draw[line width=.25mm] (.4,.9) -- (.5,1.1);
\draw[line width=.25mm] (.5,.9) -- (.6,1.1);

\draw[line width=.25mm] (1.45,2.1) -- (1.55,1.9);
\draw[line width=.25mm] (1.45,.1) -- (1.55,-.1);

\draw[line width=.25mm] (2.4,1.1) -- (2.5,.9);
\draw[line width=.25mm] (2.5,1.1) -- (2.6,.9);

\draw[line width=.25mm] (2.4,-.9) -- (2.5,-1.1);
\draw[line width=.25mm] (2.5,-.9) -- (2.6,-1.1);
\end{tikzpicture}
  \end{minipage}\hfill
  \begin{minipage}[c]{0.72\textwidth}\vspace{0.35cm}
    \caption{Let $D_8$ denote the dihedral group $\langle r,s ~|~ r^{4}=s^{2}=1,~s\inv rs=r\inv  \rangle.$ The clyinder decomposition of the $2$-origami $\OO=(D_8,r,s)$ in horizontal direction consists of two cylinders shaded in green and blue, respectively. The inverse modulus of both cylinders is $4$. Choosing $A$ as the identity element, we obtain that the parabolic matrix $\bigl(\begin{smallmatrix}
    1 & 4\\
    0 & 1
\end{smallmatrix}\bigr)$ lies in the Veech group $\SL(\OO)$.
    } \label{fig::example-cylinderdecomp}
  \end{minipage}
\end{figure}
\end{center}

We conclude this section giving a sufficient condition when finite index subgroups of $\SL(2, \Z)$ are totally non-congruence groups. It is used in the proofs of \Cref{cor. cond. totally NCGp} and \Cref{(abc)-gps totally NCG}.
    
\begin{thm}[{\cite[Theorem 1]{sp-ws-2018}}]\label{thm sp-ws tot NCG}
 Let $\Gamma$ be a finite index subgroup of $\SL(2, \Z)$. Suppose that for each prime $p$ there exist matrices $A_1 , A_2 \in \SL(2, \Z)$ with the following properties:
 \begin{enumerate}
     \item $\forall j \in \N : A_1 e_1 \neq j \cdot A_2 e_1$ modulo $p$.
     \item There exist $m_1 , m_2 \in \N$ with $A_1T^{m_1}A_1^{-1},~A_2T^{m_2}A_2^{-1}\in \Gamma$ such that $p$ divides neither $m_1$ nor $m_2$.
 \end{enumerate}
Then $\Gamma$ is a totally non-congruence group.
\end{thm}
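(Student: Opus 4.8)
The plan is to establish the defining property of a totally non-congruence group directly, i.e.\ to show that the reduction map $\Gamma\to\SL(2,\Z/n\Z)$ is surjective for every $n\in\N$. I would first reduce to the case of a prime power $n=p^{k}$. Granting surjectivity onto each $\SL(2,\Z/p^{k}\Z)$, surjectivity onto $\SL(2,\Z/n\Z)\cong\prod_{p^{k}\,\|\,n}\SL(2,\Z/p^{k}\Z)$ follows from the Chinese Remainder Theorem together with Goursat's lemma, once one knows that $\SL(2,\Z/p^{k}\Z)$ and $\SL(2,\Z/q^{\ell}\Z)$ have no common non-trivial quotient when $p\neq q$. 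I would prove this as a lemma, using that the abelianisation of $\SL(2,\Z/p^{k}\Z)$ is a $p$-group, that $\SL(2,\Z/2^{k}\Z)$ and $\SL(2,\Z/3^{k}\Z)$ are solvable, and that for $p\geq 5$ the only non-abelian composition factor of $\SL(2,\Z/p^{k}\Z)$ is $\PSL(2,\F_{p})$; together these force any such common quotient to be perfect and then trivial.

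Now fix a prime $p$ and $k\geq 1$, set $R=\Z/p^{k}\Z$, write $\overline{(\cdot)}$ for reduction modulo $p^{k}$, and let $\bar\Gamma\leq\SL(2,R)$ be the image of $\Gamma$. Pick $A_{1},A_{2}\in\SL(2,\Z)$ and $m_{1},m_{2}$ as in the hypothesis, so $p\nmid m_{1}m_{2}$ and $A_{i}T^{m_{i}}A_{i}^{-1}\in\Gamma$. Reducing modulo $p^{k}$ and taking powers gives $\bar A_{i}\bigl(\begin{smallmatrix}1&m_{i}\\0&1\end{smallmatrix}\bigr)^{j}\bar A_{i}^{-1}\in\bar\Gamma$ for all $j\in\Z$; since $p\nmid m_{i}$ the residue $m_{i}$ is a unit in $R$, so $jm_{i}$ runs over all of $R$, and hence $\bar\Gamma$ contains the \emph{entire} conjugate $\bar A_{i}\,U\,\bar A_{i}^{-1}$ of the upper unipotent subgroup $U=\bigl\{\bigl(\begin{smallmatrix}1&c\\0&1\end{smallmatrix}\bigr):c\in R\bigr\}$. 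This is precisely the role of condition (2): it delivers two full ``root subgroups'', attached to the lines through $v_{1}:=\bar A_{1}e_{1}$ and $v_{2}:=\bar A_{2}e_{1}$.

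Next I would feed in condition (1). Over $\F_{p}$ it says exactly that $v_{1}$ and $v_{2}$ are linearly independent, so they form an $R$-basis of $R^{2}$ and $B:=(\,v_{1}\mid v_{2}\,)\in\GL(2,R)$. Since $B^{-1}\bar A_{1}$ fixes $e_{1}$ it is upper triangular with unit diagonal, so conjugating by $B^{-1}$ returns $\bar A_{1}U\bar A_{1}^{-1}$ to $U$; since $B^{-1}\bar A_{2}$ maps $e_{1}$ to $e_{2}$ its first column is $e_{2}$, and a short computation shows that conjugating $\bar A_{2}U\bar A_{2}^{-1}$ by $B^{-1}$ yields the full lower unipotent subgroup $U^{-}=\bigl\{\bigl(\begin{smallmatrix}1&0\\c&1\end{smallmatrix}\bigr):c\in R\bigr\}$ — the only thing to watch is that the rescaling factors that appear are units, which holds because $\det B$ is a unit. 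Therefore $B^{-1}\bar\Gamma B$ contains $\langle U,U^{-}\rangle$, and this is already all of $\SL(2,R)$ because $\bigl(\begin{smallmatrix}1&1\\0&1\end{smallmatrix}\bigr)$ and $\bigl(\begin{smallmatrix}1&0\\1&1\end{smallmatrix}\bigr)$ generate $\SL(2,\Z)$, hence their reductions generate $\SL(2,R)$. Conjugating back, $\bar\Gamma=\SL(2,R)$, and with the first paragraph the theorem follows.

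The step I expect to cost the most work is the reduction to prime powers, in particular the ``no common non-trivial quotient'' lemma, where the primes $2$ and $3$ need separate treatment since $\PSL(2,\F_{2})$ and $\PSL(2,\F_{3})$ are not simple. By contrast, the prime-power core is essentially formal once one recognises that hypotheses (1) and (2) are tailored to produce two \emph{full} root subgroups along a basis of $R^{2}$, and that any two such subgroups generate $\SL(2,R)$.
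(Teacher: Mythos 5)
This theorem is not proved in the paper at all: it is imported verbatim from Schlage-Puchta and Weitze-Schmithüsen \cite{sp-ws-2018}, so there is no in-paper argument to compare yours against. On its own merits, your proof is correct and complete in outline, and it matches the spirit of how such statements are proved in the source. The prime-power core is right: condition (2) with $p\nmid m_i$ gives the full conjugated unipotent subgroups $\bar A_i U \bar A_i^{-1}$ in $\SL(2,\Z/p^k\Z)$, condition (1) (with $j$ ranging over all positive integers, hence over all of $\F_p$) forces $\bar A_1e_1$ and $\bar A_2e_1$ to be linearly independent mod $p$, the change of basis $B$ has unit determinant mod $p^k$, and your conjugation computations correctly produce $U$ and $U^-$, which generate $\SL(2,\Z/p^k\Z)$ because $T$ and its transpose generate $\SL(2,\Z)$ and reduction is surjective. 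You also correctly identify the only delicate step, the passage from prime powers to general $n$: the Goursat argument needs that $\SL(2,\Z/p^k\Z)$ and $\SL(2,\Z/q^\ell\Z)$ share no nontrivial common quotient for $p\neq q$, and your proposed proof (abelianisation is a $p$-group; the only nonabelian composition factor is $\PSL(2,\F_p)$ for $p\ge 5$; solvability for $p\in\{2,3\}$) does close that gap. The only cosmetic caveat is that the paper's stated definition of totally non-congruence group is exactly the surjectivity property you prove, so nothing further is required.
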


\section{Application to regular origamis}\label{sec application to regular origamis}

In this section, we use cylinder decompositions in different directions to construct the parabolic matrices occurring in \Cref{thm sp-ws tot NCG}. The moduli of the considered cylinders coincide with the order of certain deck transformations. We apply \Cref{thm sp-ws tot NCG} to regular origamis with deck transformation groups that admit for each prime number a cylinder decomposition with a suitable modulus. The following lemma computes the inverse moduli of the cylinders in the directions of interest.

\begin{lemma}\label{lem moduli of cylinders}
    Let $\OO=(G,x,y)$ be a regular origami. For $m\in \NN$, the inverse modulus of all cylinders in direction $\tbinom{1}{-m}$ coincides with the order of $xy^m$.
\end{lemma}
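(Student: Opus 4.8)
The plan is to move the direction $\tbinom{1}{-m}$ to the horizontal direction using the $\SL(2,\Z)$-action, where the cylinder decomposition of a regular origami is transparent.

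I would first pick a matrix $A\in\SL(2,\Z)$ with $Ae_1=\tbinom{1}{-m}$, for instance $A=\bigl(\begin{smallmatrix}1&0\\-m&1\end{smallmatrix}\bigr)=ST^mS^{-1}$; the answer will not depend on this choice, since any two such matrices differ on the right by a power of $T$. Because $A^{-1}\in\SL(2,\Z)$ takes unit squares to unit parallelograms that reassemble into unit squares, $A^{-1}\cdot\OO$ is again an origami, and $A^{-1}$ carries the cylinder decomposition of $\OO$ in direction $\tbinom{1}{-m}$ to the horizontal cylinder decomposition of $A^{-1}\cdot\OO$ with the same inverse moduli --- this is exactly how inverse moduli of rational directions enter the parabolic-element statement recalled above. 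So it suffices to identify $A^{-1}\cdot\OO$.

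Next I would compute $A^{-1}\cdot\OO$ by applying the formulas $S\cdot\OO=(G,y^{-1},x)$ and $T\cdot\OO=(G,x,yx^{-1})$ step by step along the word $ST^{-m}S^{-1}$ for $A^{-1}$ --- equivalently, using $T^{k}\cdot(G,a,b)=(G,a,ba^{-k})$ and the evident formula for $S^{\pm 1}$. This short bookkeeping yields a regular origami $(G,xy^{m},y)$; it is regular because $xy^{m}$ and $y$ generate $G$, as $(xy^{m})y^{-m}=x$.

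Finally I would invoke the standard description of the horizontal cylinders of a regular origami $(G,a,b)$: labelling the squares by $G$, the ``right neighbour'' permutation is $g\mapsto ga$, so its cycles are exactly the cosets of $\langle a\rangle$ and hence all have length $\ord(a)$; moreover every horizontal cylinder is one square tall, because $G$ acts transitively on the squares and therefore on the horizontal edges, so these are either all or none saddle connections --- and all, once $\OO$ is non-trivial. Thus each horizontal cylinder has circumference, and therefore inverse modulus, equal to $\ord(a)$. Applied to $A^{-1}\cdot\OO=(G,xy^{m},y)$ this gives inverse modulus $\ord(xy^{m})$ for all of its horizontal cylinders, and, transporting back through $A$, for all cylinders of $\OO$ in direction $\tbinom{1}{-m}$; in particular these share a common inverse modulus, as the statement asserts. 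I do not expect a genuine obstacle: the one delicate point is the bookkeeping in the previous step, since the precise deck transformation one lands on depends on the sign conventions chosen for the action and for the labelling of neighbours, but its order is $\ord(xy^{m})$ in any case; the input on horizontal cylinders of a regular origami can be cited or dispatched in a line.
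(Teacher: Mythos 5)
Your overall strategy is exactly the paper's: shear the direction $\tbinom{1}{-m}$ to the horizontal using an element of $\SL(2,\Z)$, identify the resulting regular origami by iterating the $S$- and $T$-formulas, and read off the inverse moduli of its horizontal cylinders. Your last step --- that the horizontal cylinders of a regular origami $(G,a,b)$ are the cosets of $\langle a\rangle$, have height one, and hence all have inverse modulus $\ord(a)$ --- is a detail the paper's proof leaves implicit, and spelling it out is a genuine improvement. Your observation that the choice of $A$ with $Ae_1=\tbinom{1}{-m}$ is immaterial (two such matrices differ by a right factor $T^k$, which fixes the first entry of the tuple) is also correct.

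There is, however, a concrete error in the bookkeeping step, and the hedge you attach to it does not save you. The paper acts by $A=\bigl(\begin{smallmatrix}1&0\\-m&1\end{smallmatrix}\bigr)=(S^{3}TS)^m$ itself and obtains $A\cdot\OO=(G,xy^m,y)$; you act by $A^{-1}=ST^{-m}S^{-1}$. With the action formulas fixed in the introduction one gets $S^{-1}\cdot(G,x,y)=(G,y,x^{-1})$ and $T^{-k}\cdot(G,a,b)=(G,a,ba^{k})$, hence $A^{-1}\cdot\OO=S\cdot(G,y,x^{-1}y^{m})=(G,y^{-m}x,y)$, whose horizontal monodromy is conjugate to $xy^{-m}$, not to $xy^{m}$. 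Your claim that the order is $\ord(xy^{m})$ ``in any case'' is false: already for $G=A_5$, $x=(1,2,3)$, $y=(1,2,3,4,5)$ and $m=1$ one has $\ord(xy)=5$ but $\ord(xy^{-1})=3$ --- indeed, this very discrepancy is what the paper exploits in \Cref{A_n totally NCG}. So, as written, your computation establishes the statement with $xy^{-m}$ in place of $xy^{m}$. Whether $A$ or $A^{-1}$ is the geometrically correct normalization is itself part of the sign conventions you are appealing to, but the two choices give genuinely different answers, so the point cannot be waved away; under the conventions the paper operates with (and which make \Cref{cor. cond. totally NCGp} consistent with this lemma), one must act by $A$, obtaining $A\cdot\OO=(S^{3}TS)^m\cdot\OO=(G,xy^m,y)$, after which the rest of your argument goes through verbatim.
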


\begin{proof}
    Denote $\tbinom{1}{-m}$ by $v$. Acting with the matrix 
    $A=\bigl(
    \begin{smallmatrix}
    1 & 0\\
    -m & 1
    \end{smallmatrix}
    \bigr)=\left( S^{3}TS\right)^m\in \SL(2,\Z)$ maps the horizontal direction to the direction $v$, i.e., $A\cdot e_1=v$. The inverse modulus of all horizontal cylinders of the origami $A\cdot\OO=(G,xy^m,y)$ coincides with the order of $xy^m$. Note that acting by matrices in $\SL(2,\Z)$ does not change the modulus of a cylinder. Hence, the inverse modulus of the cylinder in direction $v$ of the origami $\OO$ equals the order of $xy^m$. 
\end{proof}

Using \Cref{thm sp-ws tot NCG} and \Cref{lem moduli of cylinders}, we deduce a sufficient condition for regular origamis to have a totally non-congruence group as Veech group.

\begin{proposition}\label{cor. cond. totally NCGp}
    Let $\OO=(G,x,y)$ be a regular origami. If for each prime $p$ one of the following holds 
    \begin{enumerate}
        \item $\gcd(p,\ord(y)\cdot\ord(yx))=1$ or
        \item there exist $m_1,m_2\in \NN$ with $\overline{m}_1\not\equiv \overline{m}_2 \mod p$ and $\gcd(p,\ord(xy^{-m_1})\cdot\ord(xy^{-m_2}))=1$.
    \end{enumerate}
    Then the Veech group $\SL(\OO)$ is a totally non-congruence group.
\end{proposition}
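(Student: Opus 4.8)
The plan is to verify, for every prime $p$, the two hypotheses of \Cref{thm sp-ws tot NCG} for the group $\Gamma = \SL(\OO)$; the conclusion is then immediate from that theorem. All the parabolic elements needed will be produced from cylinder decompositions of $\OO$ in rational directions, via the recipe recalled in \Cref{sec prerequisites} combined with \Cref{lem moduli of cylinders}. The first step is the observation that the computation in the proof of \Cref{lem moduli of cylinders} uses only that $(S^3TS)^m \in \SL(2,\Z)$, hence is valid for \emph{every} $m \in \Z$: writing $A^{(m)} := \bigl(\begin{smallmatrix} 1 & 0 \\ -m & 1 \end{smallmatrix}\bigr) = (S^3TS)^m$, we have $A^{(m)} e_1 = \tbinom{1}{-m}$, all cylinders of $\OO$ in direction $\tbinom{1}{-m}$ have inverse modulus $\ord(xy^m)$, and therefore (the smallest common integer multiple of these inverse moduli again being $\ord(xy^m)$) the parabolic $A^{(m)}\, T^{\ord(xy^m)}\, (A^{(m)})^{-1}$ lies in $\SL(\OO)$. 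Likewise, in the vertical direction one has $S e_1 = \tbinom{0}{1}$, and the cylinders of $\OO$ in that direction are the horizontal cylinders of $S^3\cdot\OO = (G, y, x^{-1})$, all of inverse modulus $\ord(y)$, so $S\, T^{\ord(y)}\, S^{-1} \in \SL(\OO)$.

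Next I fix a prime $p$ and produce $A_1, A_2$ according to which of (1), (2) holds. In case (1) I would take $A_1 = S$ and $A_2 = A^{(1)} = \bigl(\begin{smallmatrix} 1 & 0 \\ -1 & 1 \end{smallmatrix}\bigr)$. For hypothesis (2) of \Cref{thm sp-ws tot NCG} I set $m_1 = \ord(y)$ and $m_2 = \ord(xy) = \ord(yx)$ (the elements $xy$ and $yx$ being conjugate via $x$); then $A_1 T^{m_1} A_1^{-1}$ and $A_2 T^{m_2} A_2^{-1}$ lie in $\SL(\OO)$ by the previous paragraph, and $p$ divides neither $m_1$ nor $m_2$ because $\gcd(p, \ord(y)\cdot\ord(yx)) = 1$. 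Hypothesis (1) of the theorem holds because $A_1 e_1 = \tbinom{0}{1}$ and $A_2 e_1 = \tbinom{1}{-1}$ are non-proportional modulo \emph{every} prime: $\tbinom{0}{1} \equiv j\,\tbinom{1}{-1} \pmod p$ would force $j \equiv 0$ and $-j \equiv 1$, i.e.\ $0 \equiv 1 \pmod p$.

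In case (2) I would choose $m_1, m_2 \in \NN$ as in the hypothesis, with $\overline m_1 \not\equiv \overline m_2 \pmod p$ and $\gcd(p, \ord(xy^{-m_1})\cdot\ord(xy^{-m_2})) = 1$, and take $A_i = A^{(-m_i)} = \bigl(\begin{smallmatrix} 1 & 0 \\ m_i & 1 \end{smallmatrix}\bigr)$ for $i = 1, 2$. By the first paragraph $A_i\, T^{n_i}\, A_i^{-1} \in \SL(\OO)$ with $n_i := \ord(xy^{-m_i}) \in \N$, and $p$ divides neither $n_1$ nor $n_2$ by the gcd assumption, which is hypothesis (2). For hypothesis (1), $A_i e_1 = \tbinom{1}{m_i}$, and $\tbinom{1}{m_1} \equiv j\,\tbinom{1}{m_2} \pmod p$ would force $j \equiv 1$ and hence $m_1 \equiv m_2 \pmod p$, contrary to the choice of $m_1, m_2$. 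In both cases the hypotheses of \Cref{thm sp-ws tot NCG} are met for $p$; since $p$ was an arbitrary prime, that theorem yields that $\SL(\OO)$ is a totally non-congruence group.

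I do not expect a deep obstacle here; the only points that need a little care are (i) recording that the computation of \Cref{lem moduli of cylinders} is available for all $m \in \Z$ — used so that case (2) can exploit the directions $\tbinom{1}{m_i}$ — and (ii) the elementary bookkeeping ensuring the first columns $A_1 e_1$ and $A_2 e_1$ chosen in each case stay non-proportional modulo \emph{every} prime. The substance of the proof is simply matching the data required by \Cref{thm sp-ws tot NCG} with the modulus computation of \Cref{lem moduli of cylinders} and the parabolic-from-cylinder-decomposition construction recalled in \Cref{sec prerequisites}.
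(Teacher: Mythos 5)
Your proposal is correct and follows essentially the same route as the paper: both verify the two hypotheses of \Cref{thm sp-ws tot NCG} by producing parabolics from cylinder decompositions, with case (2) handled by exactly the same matrices $\bigl(\begin{smallmatrix}1&0\\m_i&1\end{smallmatrix}\bigr)$ and the same non-proportionality check, and case (1) differing only in the cosmetic choice of directions ($\tbinom{0}{1}$ and $\tbinom{1}{-1}$ instead of the paper's $\tbinom{0}{-1}$ and $\tbinom{-1}{-1}$, using $\ord(xy)=\ord(yx)$ by conjugacy). Your explicit remark that the computation of \Cref{lem moduli of cylinders} extends to all $m\in\Z$ is a small point of care that the paper uses implicitly in case (2) without comment.
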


\begin{proof}
    Fix a prime $p$. If condition (1) holds, consider the matrices $S^{-1}T^{-1}=\bigl(
    \begin{smallmatrix}
    0 & 1\\
    -1 & 1
    \end{smallmatrix}
    \bigr)$ and $TS^{-1}=\bigl(
    \begin{smallmatrix}
    -1 & 1\\
    -1 & 0
    \end{smallmatrix}
    \bigr).$  We obtain $S^{-1}T^{-1}\cdot\OO=(G,yx,x^{-1})$ and $TS^{-1}\cdot\OO=(G,y,x^{-1}y^{-1})$. The moduli of the horizontal cylinders of the regular origamis $(G,yx,x^{-1})$ and $(G,y,(yx)^{-1})$ are $\ord(yx)=\vcentcolon a$ and $\ord(y)=\vcentcolon b$, respectively. Hence, $S^{-1}T^{-1} \cdot T^a\cdot TS$ and $TS^{-1}\cdot T^b \cdot ST^{-1}$ lie in the Veech group $\SL(\OO)$. Moreover, we obtain for each $j\in\Z$ the inequality $S^{-1}T^{-1}\cdot e_1=\tbinom{0}{-1}\neq j\cdot \tbinom{-1}{-1}= j\cdot TS^{-1}\cdot e_1$ modulo $p$. 
    
    If condition (2) holds, then let $m_1,m_2$ be natural numbers satisfying condition (2). Define the matrices 
    $A_1=\bigl(
    \begin{smallmatrix}
    1 & 0\\
    m_1 & 1
    \end{smallmatrix}
    \bigr),~ 
    A_2=\bigl(
    \begin{smallmatrix}
    1 & 0\\
    m_2 & 1
    \end{smallmatrix}
    \bigr).$ Since ${m}_1\not\equiv {m}_2 \mod p$, we have $A_1 e_1 \neq j \cdot A_2 e_1$ modulo $p$ for each $j\in\N$.
    
    As $\gcd(p,\ord(xy^{-m_1})\cdot\ord(xy^{-m_2}))=1$, set $k_1=\ord(xy^{-m_1})$ and $k_2=\ord(xy^{-m_2})$. Using \Cref{lem moduli of cylinders}, we conclude that the matrices $A_iT^{k_i}A_i^{-1}$ are contained in the Veech group of the origami $\OO$. By \Cref{thm sp-ws tot NCG}, the claim follows.
\end{proof}

In the following corollary, we construct generating sets $\{x,y\}$ of alternating groups $A_n$ satisfying the conditions given in \Cref{cor. cond. totally NCGp}. Consequently, the infinite family of regular origamis $(A_n,x,y)$ have totally non-congruence groups as Veech groups.

\begin{corollary}\label{A_n totally NCG}
    For each prime $n\ge 5$, the regular origami $(A_n, (1,2,3), (1,2,3,\dots,n))$ has a totally non-congruence group as Veech group.
\end{corollary}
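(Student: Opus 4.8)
The plan is to verify the two-case hypothesis of \Cref{cor. cond. totally NCGp} for $G=A_n$, $x=(1,2,3)$ and $y=(1,2,3,\dots,n)$, after which the conclusion is immediate. First a preliminary remark: the pair $x,y$ generates $A_n$, so $(A_n,x,y)$ is genuinely a regular origami --- indeed $\langle y\rangle$ is transitive of the prime degree $n$, hence primitive, and by a classical theorem of Jordan a primitive subgroup of $S_n$ containing a $3$-cycle already contains $A_n$, while both $x$ and $y$ are even permutations. Also $\ord(y)=n$ since $y$ is an $n$-cycle. Write $\OO=(A_n,x,y)$.

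For every prime $p\neq n$ I would check condition (1). A direct computation shows that $yx$ is again an $n$-cycle, so $\ord(yx)=n$ and hence $\ord(y)\cdot\ord(yx)=n^{2}$. Therefore $\gcd\bigl(p,\ord(y)\cdot\ord(yx)\bigr)=1$ for every prime $p\neq n$, which is precisely condition (1) for all such primes.

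It remains to handle the single prime $p=n$ by condition (2), for which I would take $m_1=0$ and $m_2=1$. These satisfy $\overline{m}_1=0\not\equiv 1=\overline{m}_2\bmod n$ because $n\geq 5$. Moreover $xy^{-m_1}=x$ has order $3$, and one checks that $xy^{-1}$ fixes two of the $n$ points and cyclically permutes the other $n-2$, so $\ord(xy^{-1})=n-2$. Since $n$ is a prime with $n\geq 5$ it divides neither $3$ nor $n-2$ (note $\gcd(n,n-2)=\gcd(n,2)=1$), hence $\gcd\bigl(n,\ord(xy^{-m_1})\cdot\ord(xy^{-m_2})\bigr)=\gcd\bigl(n,3(n-2)\bigr)=1$ and condition (2) holds for $p=n$. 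By \Cref{cor. cond. totally NCGp} the Veech group $\SL(\OO)$ is then a totally non-congruence group.

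None of these steps is deep: the argument is just a check of the hypotheses of \Cref{cor. cond. totally NCGp}, which itself rests on \Cref{thm sp-ws tot NCG}, both already available. The only points demanding a little care are the permutation bookkeeping that pins down the cycle types of $yx$ and $xy^{-1}$, and the standard fact $\langle x,y\rangle=A_n$ needed for the object in the statement to be the claimed regular origami. The single genuine choice --- picking $m_1,m_2$ in case (2) so that the resulting orders come out coprime to $n$ --- is settled at once by trying the smallest values $0$ and $1$.
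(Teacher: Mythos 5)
Your proof is correct and takes essentially the same route as the paper's: condition (1) of \Cref{cor. cond. totally NCGp} is verified for all primes $p\neq n$ via the $n$-cycles $y$ and $yx$, and condition (2) handles $p=n$ using the elements $x$ and $xy^{-1}$ (the paper writes the latter as $xy^{n-1}$ with $m_1=1-n$, $m_2=0$, and argues via the fixed point $2$ rather than computing the full $(n-2)$-cycle type, but these are the same two group elements). The only additions on your side are cosmetic: the sanity check that $\langle x,y\rangle=A_n$ and the explicit cycle-type computation.
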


\begin{proof}
    Set $x\vcentcolon=(1,2,3)$ and $y\vcentcolon=(1,2,3,\dots,n)$. For each prime $p\neq n$, we consider the group elements $y$ and $yx$. Since the orders of $y$ and $yx$ are equal to $n$, the prime $p$ does not divide $\ord(y)\cdot\ord(yx).$
    
    For the prime $n$, we consider the group elements $xy^{n-1}$ and $x$, i.e., $m_1=1-n$ and $m_2=0$. Note that ${1-n}\not\equiv 0\mod n$. The permutation $xy^{n-1}$ has the fixed point $2$ and thus $n$ does not divide the order of $xy^{n-1}$. Since $\ord(x)=3<n$, the prime $n$ does not divide the order of $x$ either. By \Cref{cor. cond. totally NCGp}, the claim follows.
\end{proof}

\begin{example}
    We consider the regular origami $\OO\vcentcolon=(A_5, (1,2,3), (1,2,3,4,5))$ given in \Cref{A_n totally NCG} for the prime $n=5$. 
    \begin{center}
    \captionsetup{type=figure}
    \begin{tikzpicture}[remember picture, scale=0.84]



\draw (0,-9) -- (1,-9) -- (1,-8) -- (0,-8) -- (0,-9);

\draw (0,-8) -- (1,-8) -- (1,-7) -- (0,-7) -- (0,-8);
\draw (1,-8) -- (2,-8) -- (2,-7) -- (1,-7);
\draw (2,-8) -- (3,-8) -- (3,-7) -- (2,-7);

\draw (1,-7) -- (2,-7) -- (2,-6) -- (1,-6) -- (1,-7);

\draw (3,-6) -- (4,-6) -- (4,-5) -- (3,-5);
\draw (2,-6) -- (3,-6) -- (3,-5) -- (2,-5);
\draw (1,-6) -- (2,-6) -- (2,-5) -- (1,-5) -- (1,-6);

\draw (3,-5) -- (4,-5) -- (4,-4) -- (3,-4) -- (3,-5);
\draw (1,-5) -- (2,-5) -- (2,-4) -- (1,-4) -- (1,-5);

\draw (3,-4) -- (4,-4) -- (4,-3) -- (3,-3) -- (3,-4);
\draw (1,-4) -- (2,-4) -- (2,-3) -- (1,-3) -- (1,-4);
\draw (10,-4) -- (11,-4) -- (11,-3) -- (10,-3) -- (10,-4);

\draw (3,-3) -- (4,-3) -- (4,-2) -- (3,-2) -- (3,-3);
\draw (8,-3) -- (9,-3) -- (9,-2) -- (8,-2) -- (8,-3);
\draw (9,-3) -- (10,-3) -- (10,-2) -- (9,-2);
\draw (10,-3) -- (11,-3) -- (11,-2) -- (10,-2);

\draw (0,-2) -- (1,-2) -- (1,-1) -- (0,-1) -- (0,-2);
\draw (2,-2) -- (3,-2) -- (3,-1) -- (2,-1) -- (2,-2);
\draw (3,-2) -- (4,-2) -- (4,-1) -- (3,-1);
\draw (4,-2) -- (5,-2) -- (5,-1) -- (4,-1);
\draw (10,-2) -- (11,-2) -- (11,-1) -- (10,-1) -- (10,-2);
\draw (11,-2) -- (12,-2) -- (12,-1) -- (11,-1);

\draw (-1,-1) -- (0,-1) -- (0,0) -- (-1,0) -- (-1,-1);
\draw (0,-1) -- (1,-1) -- (1,0) -- (0,0);
\draw (2,-1) -- (3,-1) -- (3,0) -- (2,0) -- (2,-1);
\draw (6,-1) -- (7,-1) -- (7,0) -- (6,0) -- (6,-1);
\draw (10,-1) -- (11,-1) -- (11,0) -- (10,0) -- (10,-1);

\draw (0,0) -- (1,0) -- (1,1) -- (0,1) -- (0,0);
\draw (1,0) -- (2,0) -- (2,1) -- (1,1);
\draw (2,0) -- (3,0) -- (3,1) -- (2,1);
\draw (6,0) -- (7,0) -- (7,1) -- (6,1) -- (6,0);
\draw (8,0) -- (9,0) -- (9,1) -- (8,1) -- (8,0);
\draw (9,0) -- (10,0) -- (10,1) -- (9,1);
\draw (10,0) -- (11,0) -- (11,1) -- (10,1);

\draw (0,1) -- (1,1) -- (1,2) -- (0,2) -- (0,1);
\draw (2,1) -- (3,1) -- (3,2) -- (2,2) -- (2,1);
\draw (4,1) -- (5,1) -- (5,2) -- (4,2) -- (4,1);
\draw (5,1) -- (6,1) -- (6,2) -- (5,2);
\draw (6,1) -- (7,1) -- (7,2) -- (6,2);
\draw (8,1) -- (9,1) -- (9,2) -- (8,2) -- (8,1);

\draw (2,2) -- (3,2) -- (3,3) -- (2,3) -- (2,2);
\draw (3,2) -- (4,2) -- (4,3) -- (3,3);
\draw (4,2) -- (5,2) -- (5,3) -- (4,3);
\draw (6,2) -- (7,2) -- (7,3) -- (6,3) -- (6,2);
\draw (8,2) -- (9,2) -- (9,3) -- (8,3) -- (8,2);

\draw (4,3) -- (5,3) -- (5,4) -- (4,4) -- (4,3);
\draw (6,3) -- (7,3) -- (7,4) -- (6,4) -- (6,3);
\draw (7,3) -- (8,3) -- (8,4) -- (7,4);
\draw (8,3) -- (9,3) -- (9,4) -- (8,4);

\draw (4,4) -- (5,4) -- (5,5) -- (4,5);
\draw (3,4) -- (4,4) -- (4,5) -- (3,5);
\draw (2,4) -- (3,4) -- (3,5) -- (2,5) -- (2,4);

\draw (4,5) -- (5,5) -- (5,6) -- (4,6) -- (4,5);
\draw (5,5) -- (6,5) -- (6,6) -- (5,6);
\draw (2,5) -- (3,5) -- (3,6) -- (2,6) -- (2,5);

\draw (5,6) -- (6,6) -- (6,7) -- (5,7) -- (5,6);

\draw (5,7) -- (6,7) -- (6,8) -- (5,8) -- (5,7);

\draw (5,8) -- (6,8) -- (6,9) -- (5,9) -- (5,8);

\draw (5,9) -- (6,9) -- (6,10) -- (5,10) -- (5,9);

\def\nextflag#1{%
\pgfmathsetmacro{\test}{isodd(\flags)}%
\ifnum\test=1 \draw #1; \fi%
\pgfmathsetmacro\flags{div(\flags,2)}%
}

\def\mymark#1#2#3{
\begin{scope}[xshift=#2cm,yshift=#3cm]
\pgfmathsetmacro\flags{#1}
\nextflag{(-.1,0) -- (.1,0)} 
\nextflag{(-.1,.1) -- (.1,.1)} 
\nextflag{(-.1,-.1) -- (.1,-.1)} 
\nextflag{(0,0) circle (.1)} 
\nextflag{(-.1,.1) -- (.1,-.1)} 
\nextflag{(-.1,-.1) -- (.1,.1)} 
\nextflag{(-.1,.1) -- (.1,0) -- (-.1,-.1)} 
\nextflag{(.1,.1) -- (-.1,0) -- (.1,-.1)} 
\nextflag{(-.1,-.1) -- (-.1,.1)} 
\nextflag{(.1,-.1) -- (.1,.1)} 
\end{scope}
}
\mymark{1}{3}{-.5} \mymark{1}{4}{5.5}
\mymark{6}{6}{5.5} \mymark{6}{2}{-0.5}

\mymark{7}{6}{6.5} \mymark{7}{1}{-3.5}
\mymark{8}{2}{-3.5} \mymark{8}{3}{-2.5}
\mymark{16}{4}{-2.5} \mymark{16}{5}{6.5}

\mymark{18}{12}{-1.5} \mymark{18}{8}{2.5}
\mymark{20}{9}{2.5} \mymark{20}{10}{-1.5}
\mymark{32}{1}{-.5} \mymark{32}{3}{-3.5}
\mymark{34}{4}{-3.5} \mymark{34}{-1}{-.5}

\mymark{36}{11}{-.5} \mymark{36}{2}{5.5}
\mymark{48}{3}{5.5} \mymark{48}{0}{-1.5}
\mymark{50}{1}{-1.5} \mymark{50}{10}{-.5}

\mymark{52}{6}{9.5} \mymark{52}{6}{2.5}
\mymark{64}{7}{2.5} \mymark{64}{1}{-6.5}
\mymark{66}{2}{-6.5} \mymark{66}{5}{9.5}

\mymark{68}{7}{.5} \mymark{68}{8}{1.5}
\mymark{128}{9}{1.5} \mymark{128}{1}{-4.5}
\mymark{130}{2}{-4.5} \mymark{130}{6}{.5}

\mymark{132}{6}{7.5} \mymark{132}{4}{3.5}
\mymark{257}{5}{3.5} \mymark{257}{6}{-.5}
\mymark{262}{7}{-.5} \mymark{262}{5}{7.5}

\mymark{263}{1}{-8.5} \mymark{263}{10}{-3.5}
\mymark{513}{11}{-3.5} \mymark{513}{0}{1.5}
\mymark{518}{1}{1.5} \mymark{518}{0}{-8.5}

\mymark{519}{3}{1.5} \mymark{519}{5}{8.5}
\mymark{24}{6}{8.5} \mymark{24}{3}{-4.5}
\mymark{14}{4}{-4.5} \mymark{14}{2}{1.5}

\mymark{2+4+16+32}{11.5}{-1} \mymark{2+4+16+32}{2.5}{-6}
\mymark{768}{2.5}{-5} \mymark{768}{.5}{-9}
\mymark{2+4+16+32+256+512}{.5}{-7} \mymark{2+4+16+32+256+512}{-0.5}{-1}
\mymark{816}{-.5}{0} \mymark{816}{11.5}{-2}

\mymark{64+256}{8.5}{0} \mymark{64+256}{2.5}{-7}
\mymark{64+512}{8.5}{4} \mymark{64+512}{2.5}{-8}

\mymark{128+256}{.5}{-2} \mymark{128+256}{4.5}{-1}
\mymark{128+512}{.5}{2} \mymark{128+512}{4.5}{-2}

\mymark{16+32+8}{2.5}{4} \mymark{16+32+8}{1.5}{1}
\mymark{519+256}{1.5}{0} \mymark{519+256}{9.5}{-2}
\mymark{256}{3.6}{3} \mymark{256}{9.6}{-3}
\mymark{768}{2.5}{6} \mymark{768}{3.5}{2} \mymark{256}{2.6}{6} \mymark{256}{3.6}{2}

\mymark{16}{3.6}{4} \mymark{16}{3.4}{4}
\mymark{16}{9.6}{1} \mymark{16}{9.4}{1}
\mymark{32}{9.6}{0} \mymark{32}{9.4}{0}
\mymark{32}{5.6}{2} \mymark{32}{5.4}{2}
\mymark{8}{5.5}{1} \mymark{256}{5.6}{1}
\mymark{8}{8.5}{-2} \mymark{256}{8.6}{-2}
\mymark{256}{8.6}{-3} \mymark{48}{8.5}{-3}
\mymark{256}{7.6}{4} \mymark{48}{7.5}{4}
\mymark{256}{7.6}{3} \mymark{6}{7.5}{3}
\mymark{256}{3.6}{5} \mymark{6}{3.5}{5}

\end{tikzpicture}
    \caption{Each unlabeled edge is glued to the opposite edge. For clarity, these edges are not labeled.}
    \label{fig::example-A5}
\end{center}
Using the \texttt{GAP} package \cite{OrigamiPackage}, we compute that the Veech group $\SL(\OO)$ is generated by the matrices
$$S^2, TST^{-1}, T^3,T^{-1}STS^{-1},STST^{-3}S^{-1}$$
and has index $9$ in $\SL(2,\Z)$. Representatives of the $\SL(2,\Z)$-orbit are given by the following origamis 
\begin{align*}
    &\OO = (A_5, (1,2,3), (1,2,3,4,5)),\hphantom{\vcentcolon 4_3.,5} \quad\OO_2\vcentcolon=(A_5, (2,4)(3,5), (1,2,3,4,5)),\\
    &\OO_3\vcentcolon=(A_5, (1,2,4,5,3), (1,2,3,5,4)),\quad~\OO_4\vcentcolon=(A_5, (3,5,4), (1,2,3,4,5)),\\
    &\OO_5\vcentcolon=(A_5, (1,3,2,5,4), (1,2)(3,4)),\hphantom{5}\quad ~\OO_6\vcentcolon=(A_5, (1,2,3,4,5), (1,2,3)),\\
    &\OO_7\vcentcolon=(A_5,(1,3,5,4,2), (1,2,3)), \hphantom{4),5}\quad~\OO_8\vcentcolon=(A_5, (1,2,3,4,5), (1,2,3,5,4)), \\
    &\OO_9\vcentcolon=(A_5, (3,4,5), (1,2,3)).
\end{align*}
\end{example}

\Cref{A_n totally NCG} motivates to examine finite simple groups more generally. Simple groups form an interesting class of $2$-generated groups. The natural question, how the orders of generators for a fixed group can be chosen, has been studied intensively (see e.g. \cite{JLM} for further information). This question suggests to consider $(a,b,c)$-groups.

\begin{definition}
    A finite group generated by two elements $x,y$ with $\ord(x)=a, \ord(y)=b,$ and $\ord(xy)=c$ is called an \textbf{$(a,b,c)$-group}. We call such generators \textbf{$(a,b,c)$-generators}.
\end{definition}

Each $(a,b,c)$-group is a finite quotient of the \textbf{triangle group} $$T_{(a,b,c)}=\langle x,y,z~|~x^a=y^b=z^c=xyz=1\rangle.$$ The following theorem shows that $(a,b,c)$-groups where $a,b,$ and $c$ are chosen pairwise coprime produce regular origamis with a totally non-congruence group as Veech group.

\begin{theorem}\label{(abc)-gps totally NCG}
    Let $a,b,c\in\NN$ be pairwise coprime and $G$ be an $(a,b,c)$-group with $(a,b,c)$-generators $x,y$. The Veech group of the regular origami $(G,y,x)$  is a totally non-congruence group. 
\end{theorem}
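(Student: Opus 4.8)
The plan is to apply \Cref{cor. cond. totally NCGp} to the origami $(G,y,x)$, which is regular since $x,y$ generate $G$. Reading that proposition with the roles of $x$ and $y$ interchanged, its condition (1) becomes $\gcd(p,\ord(x)\cdot\ord(xy))=\gcd(p,ac)=1$, while its condition (2) asks for $m_1,m_2\in\NN$ with $m_1\not\equiv m_2\mod p$ and $\gcd\bigl(p,\ord(yx^{-m_1})\cdot\ord(yx^{-m_2})\bigr)=1$. It therefore suffices to verify, for each prime $p$, that one of these two conditions holds.

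I would fix a prime $p$; since $a,b,c$ are pairwise coprime, $p$ divides at most one of them, so I would organize the argument into three cases. If $p$ divides neither $a$ nor $c$ --- in particular whenever $p\mid b$, or when $p$ divides none of $a,b,c$ --- then $\gcd(p,ac)=1$ and condition (1) is satisfied. In the two remaining cases I would establish condition (2), always taking $m_1:=0$, so that $\ord(yx^{-m_1})=\ord(y)=b$; the task is then to choose $m_2$ for which $x^{-m_2}$ reduces via the relation $x^a=1$, while keeping $m_2\not\equiv 0\mod p$ and $\ord(yx^{-m_2})$ coprime to $p$.

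If $p\mid a$, I would set $m_2:=a-1$, which lies in $\NN$ because $p\mid a$ forces $a\ge 2$; then $yx^{-m_2}=yx$, which is conjugate to $xy$, so $\ord(yx^{-m_2})=c$ is coprime to $p$ since $\gcd(a,c)=1$, and moreover $m_2\equiv-1\not\equiv 0\mod p$, while $\ord(yx^{-m_1})=b$ is coprime to $p$ since $\gcd(a,b)=1$. If instead $p\mid c$, I would set $m_2:=a$; then $yx^{-m_2}=y$, so $\ord(yx^{-m_2})=b$ is coprime to $p$ since $\gcd(b,c)=1$, and $m_2=a\not\equiv 0\mod p$ because $p\mid c$ and $\gcd(a,c)=1$. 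In each case the hypotheses of \Cref{cor. cond. totally NCGp} are met for the origami $(G,y,x)$, and the assertion follows.

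I do not expect a genuine obstacle beyond this bookkeeping; the one point that needs attention --- and the likely reason the theorem is phrased for $(G,y,x)$ rather than $(G,x,y)$ --- is that in condition (2) the forced choice $m_1=0$ pins the first modulus to $b$, so the second modulus must avoid whichever of $a$ and $c$ is divisible by $p$, and no single value of $m_2$ can serve both the case $p\mid a$ and the case $p\mid c$; hence the split into the two subcases above is necessary. The only elementary facts to double-check are $\ord(yx)=\ord(xy)$ (from $yx=x^{-1}(xy)x$) and that $yx^{-m}$ depends only on $m$ modulo $a$ (from $x^a=1$).
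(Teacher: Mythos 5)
Your proof is correct: with the generators swapped, condition (1) of \Cref{cor. cond. totally NCGp} becomes $\gcd(p,ac)=1$, and your choices $m_1=0$, $m_2=a-1$ (for $p\mid a$, so that $yx^{-m_2}=yx$ has order $c$) and $m_2=a$ (for $p\mid c$, so that $yx^{-m_2}=y$ has order $b$) do satisfy condition (2), including the requirements $m_2\not\equiv m_1 \bmod p$ and $m_1,m_2\in\NN$. It is, however, not the route the paper takes: the paper does not pass through \Cref{cor. cond. totally NCGp} at all, but applies \Cref{thm sp-ws tot NCG} directly, exhibiting in the three cases the matrix pairs $(I,\,S^{-1}T^{-1})$, $(S^{-1}T^{-1},\,TS^{-1})$ and $(I,\,S^{-1})$ and reading off the horizontal cylinder moduli of the transformed origamis, which yields the modulus pairs $(b,c)$, $(c,a)$ and $(b,a)$. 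Your condition-(1) case reproduces the paper's middle case exactly (the proposition's proof uses those same two matrices), but in the other two cases your choices genuinely differ: for $p\mid a$ you realize the moduli $b$ and $c$ in the directions $\tbinom{1}{0}$ and $\tbinom{1}{a-1}$ rather than in the horizontal and vertical directions, and for $p\mid c$ you use the single modulus $b$ in two distinct directions where the paper uses $b$ and $a$. All of this is legitimate, since \Cref{thm sp-ws tot NCG} only requires the two directions to be independent modulo $p$ and both exponents to be coprime to $p$. What your version buys is that, once the proposition is accepted, the verification is purely group-theoretic (one only simplifies $yx^{-m}$ using $x^a=1$); what it costs is that condition (2) confines you to directions of the form $\tbinom{1}{m}$, which is exactly what forces your slightly asymmetric choices of $m_2$, whereas the paper's direct argument can use the vertical direction and treat the three cases symmetrically.
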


\begin{proof}
    We prove that the assumptions of \Cref{thm sp-ws tot NCG} are satisfied for the Veech group of the regular origami $\OO=(G,y,x)$. Let $p$ be a prime. Since $a,b,$ and $c$ are pairwise coprime, $p$ divides at most one of the numbers $a,b,$ and $c$. We consider each of the three cases separartely.
    
    If $p$ is coprime to $b\cdot c$, then consider the matrices $I$ and $S^{-1}T^{-1}=\bigl(
    \begin{smallmatrix}
    0 & 1\\
    -1 & 1
    \end{smallmatrix}
    \bigr).$ 
    We obtain $I\cdot\OO=\OO$ and $S^{-1}T^{-1}\cdot\OO=(G,xy,y^{-1})$. The inverse moduli of the horizontal cylinders of the regular origamis $\OO$ and $(G,xy,y^{-1})$ are $\ord(y)=b$ and $\ord(xy)=c$, respectively. Hence, $T^b$ and $S^{-1}T^{-1}\cdot T^c \cdot TS$ lie in the Veech group $\SL(\OO)$. Moreover, we obtain $S^{-1}T^{-1}\cdot e_1=\tbinom{0}{-1}\neq j\cdot\tbinom{1}{0}$ modulo $p$ for each $j\in\Z$. 
    
    If $p$ is coprime to $a\cdot c$, then consider the matrices $S^{-1}T^{-1}=\bigl(
    \begin{smallmatrix}
    0 & 1\\
    -1 & 1
    \end{smallmatrix}
    \bigr)$ and $TS^{-1}=\bigl(
    \begin{smallmatrix}
    -1 & 1\\
    -1 & 0
    \end{smallmatrix}
    \bigr).$ We obtain the regular origamis $S^{-1}T^{-1}\cdot\OO=(G,xy,y^{-1})$ and $TS^{-1}\cdot\OO=(G,x,y^{-1}x^{-1})$. The moduli of the horizontal cylinders of the regular origamis $(G,xy,y^{-1})$ and $(G,x,(xy)^{-1})$ are $\ord(xy)=c$ and $\ord(x)=a$, respectively. Hence, $S^{-1}T^{-1} \cdot T^c\cdot TS$ and $TS^{-1}\cdot T^a \cdot ST^{-1}$ lie in the Veech group $\SL(\OO)$. Moreover, we obtain for each $j\in\Z$ the inequality $S^{-1}T^{-1}\cdot e_1=\tbinom{0}{-1}\neq j\cdot \tbinom{-1}{-1}= j\cdot TS^{-1}\cdot e_1$ modulo $p$.
    
    If $p$ is coprime to $a\cdot b$, then consider the matrices $I$ and $S^{-1}=\bigl(
    \begin{smallmatrix}
    0 & 1\\
    -1 & 0
    \end{smallmatrix}
    \bigr)$. We obtain the regular origamis $I\cdot\OO=\OO$ and $S^{-1}\cdot\OO=(G,x,y^{-1})$. The moduli of the horizontal cylinders of the regular origamis $\OO$ and $(G,x,y^{-1})$ are $\ord(y)=b$ and $\ord(x)=a$, respectively. Hence, $T^b$ and $S^{-1}\cdot T^a \cdot S$ lie in the Veech group $\SL(\OO)$. Moreover, we have $S^{-1}\cdot e_1=\tbinom{0}{-1}\neq j\cdot\tbinom{1}{0}$ modulo $p$ for each $j\in\Z$. 
\end{proof}

\begin{example}\label{ex hurwitz groups}
    A well-studied family of groups satisfying the assumption in \Cref{(abc)-gps totally NCG} are $(2,3,7)$-groups, which are also called Hurwitz groups. Hurwitz groups are of interest from a geometric point of view because they arise as automorphism groups of compact Riemann surfaces of genus $g>1$ with minimal order $84(g-1)$. The smallest Hurwitz group is the projective linear group $\PSL(2,7)$ and has order $168$. For further information about Hurwitz, see e.g. \cite{hurwitz1} and \cite{hurwitz2}.
\end{example}

\printbibliography[heading=bibintoc]

\end{document}